\newcommand{\norm}[1]{\lVert#1\rVert}
\newtheorem{theorem}{Theorem}
\newtheorem{lemma}{Lemma}
\theoremstyle{definition}
\newtheorem{definition}{Definition}
\newtheorem{remark}{Remark}
\newtheorem{example}{Example}
\begin{document}
\title[Sub-exponential Rate of Convergence]{Sub-exponential Rate of Convergence to Equilibrium for Processes on the Half-Line}
\author{Andrey Sarantsev}
\address{Department of Mathematics and Statistics, University of Nevada, Reno}
\email{asarantsev@unr.edu}
\dedicatory{This article is dedicated to Professor Anna Panorska\\ on the occasion of her 20 years at the University of Nevada, Reno.}

\begin{abstract}
A powerful tool for studying long-term convergence of a Markov process to its stationary distribution is a Lyapunov function. In some sense, this is a substitute for eigenfunctions. For a stochastically ordered Markov process on the half-line, Lyapunov functions can be used to easily find explicit rates of convergence. Our earlier research focused on exponential rate of convergence. This note extends these results to slower rates, including power rates, thus improving results of (Douc, Fort, Guillin, 2009). 
\end{abstract}

\maketitle

\thispagestyle{empty}

\section{Introduction}

\subsection{Lyapunov functions and long-term convergence} Take a continuous-time Markov process $X = (X(t),\, t \geqslant 0)$ with transition function $P^t(x, A) := \mathbb P(X(t) \in A\mid X(0) = x)$, and  semigroup $P^tf(x) = \mathbb E\left[f(X(t))\mid X(0) = x\right]$. Suppose $X$ has a unique {\it stationary distribution} $\pi$, a probability measure on its state space $S$ such that if the process starts from $X(0) \sim \pi$, then for every $t \geqslant 0$ we have: $X(t) \sim \pi$. Does the measure $P^t(x, \cdot)$ converge to $\pi$ for all $x$ as $t \to \infty$, in which distance, and how fast? 

For continuous-time Markov chains on a finite state space, the answer depends on the generating matrix $\mathcal L$ of transition intensities: Loosely speaking, the eigenvalue closest to $0$ gives us the exponential convergence rate. For general state spaces $S$, for example in $\mathbb R^d$, this description of eigenvalues is difficult or impossible. This is because $\mathcal L$ is no longer a matrix: It is an operator defined by 
$$
\mathcal Lf(x) = \lim\limits_{t \downarrow 0}\frac1t\left[P^tf(x) - f(x)\right]
$$
on a certain space of functions $f : S \to \mathbb R$. It is often hard to find eigenvalues and eigenfunctions of $\mathcal L$, but there is a useful substitute: {\it Lyapunov functions}. There are various versions of this concept, but the basic property is (loosely speaking) that this is a function $V : S \to [1, \infty)$ such that for some compact subset $K \subseteq S$, 
\begin{equation}
\label{eq:Lyapunov}
\mathcal LV(x) \leqslant -c < 0,\, x \in S\setminus K.
\end{equation}
Then under some additional technical conditions the process $X$ is {\it ergodic:} That is, $P^t(x, \cdot) \to \pi$ as $t \to \infty$ for every $x \in S$ in the {\it total variation norm}:
\begin{equation}
\label{eq:TV-def}
\sup\limits_{A \subseteq S}|P^t(x, A) - \pi(A)| \to 0,\, t \to \infty.
\end{equation}
 Let us explain the intuition. The function $V$ measures the ``altitude" reaches by the process $X$. When the process is outside of the ``valley'' which is the compact set $K$, it is compelled by the condition~\eqref{eq:Lyapunov} to ``decrease altitude''. This random process has negative ``drift'' until it gets back to the ``valley''. This ``gravity force'' implies (after additional technical work) its ergodicity.  See articles \cite{MT93a, MT93b} for rigorous exposition, and the classic book \cite{MTBook} for similar concepts on discrete-time Markov chains. Under a stronger than~\eqref{eq:Lyapunov} condition: For a constant $k > 0$ and a compact $K \subseteq S$, 
\begin{equation}
\label{eq:Lyapunov-exp}
\mathcal LV(x) \leqslant -kV(x),\, x \in S\setminus K,
\end{equation}
we get {\it exponential ergodicity:} $P^t(x, \cdot) \to \pi$ as $t \to \infty$ for all $x \in S$ as fast as $Ce^{-\varkappa t}$ for some $C, \varkappa > 0$. See rigorous statements in \cite{DMT95}. However, finding or estimating $\varkappa$ proved to be very hard (compared to finding or estimating $k$ from~\eqref{eq:Lyapunov-exp}, which is often easy in practice); see for example \cite{MT94, RT00}. Among other articles on this topic, let us mention \cite{BCG08, Davies}; and applications to Markov Chain Monte Carlo techniques, \cite{MCMC}.

There is one special case when we simply conclude that $\varkappa = k$ from~\eqref{eq:Lyapunov-exp}: The state space $S = \mathbb R_+ := [0, \infty)$, the exceptional compact set is $K = \{0\}$, and the process $X$ is {\it stochastically ordered}. The latter means that for any $x_2 \geqslant x_1 \geqslant 0$, we can {\it couple} two copies $X_1$ and $X_2$ of this process starting from $X_i(0) = x_i$, on the same probability space, so that $X_1(t) \leqslant X_2(t)$ for all $t \geqslant 0$. This statement was shown in \cite{LMT96} and improved in \cite{Sar16} using the coupling method. We used this method in risk theory \cite{Risk} and for more general processes, so-called Walsh diffusions, which include as a particular case diffusions on the real line, \cite{Walsh}. 

\subsection{Our results} One can also have a condition stronger than~\eqref{eq:Lyapunov}, but  weaker than~\eqref{eq:Lyapunov-exp}: For some increasing concave function $\varphi : [1, \infty) \to [0, \infty)$ and some compact set $K \subseteq S$, 
\begin{equation}
\label{eq:Lyapunov-phi}
\mathcal LV(x) \leqslant -\varphi(V(x)),\, x \in S\setminus K.
\end{equation}
It is shown in \cite{DFG09} that this implies (under some additional technical conditions) a {\it subexponential}, or {\it subgeometric}, convergence rate $P^t(x, \cdot) \to \pi$ to $t \to \infty$. That is, the total variation distance converges to zero, but at a rate slower than exponential, given by $Ce^{-\varkappa t}$. In that article and earlier ones \cite{FR05, TT94}, they were able to find explicit rates, similar to $ct^{-\alpha}$ for some $C, \alpha > 0$, or $C\exp\left[-\lambda t^{\beta}\right]$ for $C, \lambda > 0$ and $\beta \in (0, 1)$.

In this note, as opposed to \cite{DFG09, FR05}, we assume that the compact set $K = \{0\}$ on the state space $S = [0, \infty)$, and that  the process $X$ is stochastically ordered. We improve upon the results of these articles cited above. In other words, we combine ideas from \cite{DFG09, LMT96, Sar16} in the context of subgeometric ergodicity. This leads to simple and elegant proofs. We explore how our new results fit into exponential ergoicity framework from \cite{Sar16}. 

We give examples of reflected diffusions, jump-diffusions, and L\'evy processes on $\mathbb R_+$. These processes behave as, correspondingly, diffusions, jump-diffusions, and L\'evy proceses, as long as they do not hit zero. When this happens, they are reflected back inside the positive half-line. In our examples, they have power rates of convergence: $h(t) := t^{-\beta}$ for $\beta > 0$. These processes  have been extensively studied, from the original article \cite{Skorohod} on stochastic differential equations with reflection on the half-line, to recent articles \cite{AB} on multidimensional reflected jump-diffusions and \cite{M1, M2} on reflected L\'evy processes. These processes have applications in queueing theory (heavy traffic approximation), see \cite{Qnew, Q} and references therein; and, more recently, in financial mathematics, see \cite{Survey, SPT} and references therein.

We prove convergence not only in total variation norm, but in stronger norms. Such convergence implies the convergenc of moments up to a certain order. As in \cite{DFG09}, there is a tradeoff between the norm and the rate of convergence. 

\subsection{Overview of related research} Similar questions were studied with relation to functional inequalities: log-Sobolev, Poincare, Talagrand. The latter inequalities are related to probabilities of large deviations from the median. We cite the books \cite[Chapter 1]{WangBook} and \cite{LedouxBook}. However, they deal with exponential rate of convergence. This is the case of {\it spectral gap:} when the generator has eigenvalues with negative real parts, separated from zero. Here, we deal with subexponential rate of convergence, when there is no spectral gap. There is some literature on this case: see an old article \cite{Rockner} and very recent articles \cite{Ben, Degenerate} and citations therein. But they deal with convergence in $L^2$-norm: If $\pi$ is the stationary distribution, then for $u : S \to \mathbb R$ with $\int_Su^2(x)pi(\mathrm{d}x) < \infty$, 
$$
\int_S\left[(P^tu)(x) - \int_S(P^tu)(y)\,\pi(\mathrm{d}y)\right]^2\,\pi(\mathrm{d}x) \leqslant C(u)f(t),\, f(t) \to 0,\, t \to \infty,
$$
Our estimates are in the total variation norm as in~\eqref{eq:TV-def} or stronger related norms, which take into account the starting point $x$, instead of integrating over $x$ with respect to $\pi$. 

One can also study convergence in Wasserstein distance. This is another family of distance metrics which is different than total variation and other distance measures from this article. Wasserstein distance takes into account the metric structure of the state space, which is related to weak convergence. In fact, convergence of measures in Wasserstein distance is equivalent to weak convergence plus convergence of moments of the given order. The author studied this question for exponential convergence rate for jump-diffusion processes in \cite{Sar}. For subgeometric convergence, this was studied in \cite{Butkovsky}. 

\subsection{Organization of the article} In Section 2, we state all notation and definitions, present main results, and discuss their relationship with existing research. In Section 3, we present examples, and Section 4 is devoted to proofs of main results. 

\subsection{Acknowledgements} We thank Professor Mark M. Meerschaert for invitation to give a talk in November 2016 at the Colloquium at the Department of Statistics \& Probability, Michigan State University in East Lansing, and for useful discussion there. We thank the Department of Mathematics \& Statistics, University of Nevada in Reno, for a supportive atmosphere for research and professional development.

\section{Notation, Definitions, and Main Results}

\subsection{Notation and definitions} Let $\mathbb R_+ := [0, \infty)$. For two probability measures $\rho_1$ and $\rho_2$ on $\mathbb R_+$, we can define their {\it stochastic maximum} $\rho_0 := \rho_1\vee\rho_2$ is defined by $\rho_0((x, \infty)) := \rho_1((x, \infty))\vee\rho_2((x, \infty))$, $x \geqslant 0$. For a signed measure $\nu$ on $\mathbb R_+$ and a function $f : \mathbb R_+ \to \mathbb R$, we define $(\nu, f) := \int_0^{\infty}f(x)\nu(\mathrm{d}x)$. For a function $V : \mathbb R_+ \to [1, \infty)$, the {\it $V$-norm} of a signed measure $\nu$ on $\mathbb R_+$ is  
$$
\norm{\nu}_V := \sup\limits_{|f| \leqslant V}|(\nu, f)|,
$$
where the sup is taken over all functions $f : \mathbb R_+ \to \mathbb R$ such that $|f(x)| \leqslant V(x)$. If $V \equiv 1$, then this norm is denoted by $\norm{\cdot}_{\mathrm{TV}}$ and is called the {\it total variation norm}. A family of Borel measures $(Q_x)_{x \geqslant 0}$ on $\mathbb R_+$ is called {\it stochastically ordered} if $Q_{x}([z, \infty)) \leqslant Q_y([z, \infty))$ for $0 \leqslant x \leqslant y$ and $z \geqslant 0$. 

We operate on a filtered probability space $(\Omega, \mathcal F, (\mathcal F_t)_{t \geqslant 0}, \mathbb P)$. Consider a Markov process $X = (X(t),\, t \geqslant 0)$ on $\mathbb R_+$ with transition kernel $P^t(x, \cdot)$. Formally, $P^t(x, A) = \mathbb P(X(t) \in A\mid X(0) = x)$. This process $X$ has {\it positivity property} if $P^t(x, B) > 0$ for any $t > 0$, $x \geqslant 0$, and a Borel subset $B \subseteq \mathbb R_+$ of positive Lebesgue measure. For any initial value $X(0) = x$, we can construct a copy of this Markov process. If the initial distribution is $X(0) \sim \rho$, then the distribution of $X(t)$ is written as 
$$
\rho P^t,\quad \mbox{where}\quad \rho P^t(B) := \int_0^{\infty}P^t(x, B)\rho(\mathrm{d}x)\quad \mbox{for}\quad B \subseteq \mathbb R_+.
$$
The transition semigroup $(P^t)_{t \geqslant 0}$ is defined as 
$$
P^tf(x) = \mathbb E\left[f(X(t))\mid X(0) = x\right] = \int_0^{\infty}P^t(x, \mathrm{d}y)f(y).
$$
This process has {\it generator} $\mathcal L$ with domain $\mathcal D(\mathcal L)$:
$$
\mathcal L f(x) = \lim\limits_{t \downarrow 0}\frac1t\left(P^tf(x) - f(x)\right),\quad f \in \mathcal D(\mathcal L).
$$
This Markov process is called {\it stochastically ordered} if for every $t \geqslant 0$, the family of measures $(P^t(x, \cdot))_{x \geqslant 0}$ is stochastically ordered. A probability measure $\pi$ on $\mathbb R_+$ is called  a {\it stationary distribution} for this Markov process if $X(0) \sim \pi$ implies $X(t) \sim \pi$ for all $t > 0$: 
$$
\int_0^{\infty}P^t(x, A)\pi(\mathrm{d}x) = \pi(A)\quad \forall A \subseteq \mathbb R_+.
$$
Take a strictly increasing concave $\varphi : [1, \infty) \to \mathbb R_+$. Define
\begin{equation}
\label{eq:Phi-def}
\Phi(s) := \int_1^s\frac{\mathrm{d}u}{\varphi(u)}.
\end{equation}
Assuming this function is finite for all $s > 1$, it is strictly increasing $\Phi : [1, \infty) \to \mathbb R_+$, thus it has a well-defined inverse $\Psi := \Phi^{-1}$ on $[0, \Phi(\infty))$, with $\Phi(\Psi(v)) \equiv v$ for $v \geqslant 0$. Note that $\Psi(\infty) = \infty$: If $\Psi(\infty) = a < \infty$, then $\Phi(a) = \infty$, which contradicts our assumptions.

A {\it $\varphi$-Lyapunov function} for this process $X$  is a function $V : \mathbb R_+ \to (0, \infty)$ such that the following process is a local $(\mathcal F_t)_{t \geqslant 0}$-supermartingale:
\begin{equation}
\label{eq:phi-Lyapunov}
\varphi(V(X(t\wedge\tau_0))) + \int_0^{t\wedge\tau_0}\mathcal L V(X(s))\,\mathrm{d}s,\quad t \geqslant 0,
\end{equation}
where $\tau_0 := \inf\{t \geqslant 0\mid X(t) = 0\}$ is the hitting time of $0$. 

\begin{remark}
If $V \in \mathcal D(\mathcal L)$, this condition~\eqref{eq:phi-Lyapunov} is equivalent to $\mathcal LV(x) \leqslant -\varphi(V(x))$ for $x > 0$. But it is more convenient for us to write this condition in the form~\eqref{eq:phi-Lyapunov}. We consider functions with $V'(0) \ne 0$, but reflected processes have generator domain $\mathcal D(\mathcal L)$ restricted by $V'(0) = 0$.
\end{remark}

\begin{example}
For $\varphi(s) = ks,\, k > 0$, this becomes {\it modified Lyapunov function} from \cite{Sar16}.
\end{example}

\subsection{Main results} 
Take a function $G(t, u) := \Psi(\Phi(u) + t)$, $u \geqslant 1$, $t \geqslant 0$. 

\begin{theorem} Take a concave increasing function $\varphi : [1, \infty) \to [0, \infty)$ with finite $\Phi$ from~\eqref{eq:Phi-def}. Assume $X$ is a semimartingale and a stochastically ordered Markov process on $\mathbb R_+$. Take a $\varphi$-Lyapunov function $V$ such that for some nondecreasing functions $U, r : \mathbb R_+ \to \mathbb R_+$, 
\begin{equation}
\label{eq:product}
h(t)U(x) \leqslant G(t, V(x)),\quad t, x \geqslant 0.
\end{equation}
Then we have the following results:

\begin{enumerate}[(a)]

\item Take two copies $X_1$ and $X_2$ of $X$ starting from $x_1$ and $x_2$, with $0 \leqslant x_1 \leqslant x_2$. Then
$$
\norm{P^t(x_1, \cdot) - P^t(x_2, \cdot)}_U \leqslant 2h^{-1}(t)V(x_2),\quad t \geqslant 0.
$$

\item Take two copies $X_1$ and $X_2$ of $X$ starting from distributions $\rho_1$ and $\rho_2$ on $\mathbb R_+$. Then we can write $X_i(t) \sim \rho_iP^t$, $i = 1, 2$. Then 
$$
\norm{\rho_1P^t - \rho_2P^t}_U \leqslant 2h^{-1}(t)(\rho_1\vee\rho_2,V),\quad t \geqslant 0.
$$

\item Take a copy $X$ starting from $X(0) \sim \rho$ with $(\rho, V) < \infty$. Assume $X$ has a unique stationary distribution $\pi$ with $(\pi, V) < \infty$, then 
$$
\norm{\rho P^t - \pi}_U \leqslant 2h^{-1}(t)(\pi\vee\rho, V).
$$
\end{enumerate}
\label{thm:main}
\end{theorem}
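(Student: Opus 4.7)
The plan is to establish (a) first---it carries the whole technical weight---and then derive (b) and (c) by additional, routine couplings of the initial distributions. Part (a) itself decouples into a \emph{coupling reduction} that exploits stochastic ordering, followed by a \emph{Lyapunov control} of the truncated expectation $\mathbb{E}_{x_2}[U(X(t));\,t<\tau_0]$.

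For the coupling reduction, build two copies $X_1,X_2$ of $X$ on one probability space with $X_i(0)=x_i$ and $X_1(t)\leq X_2(t)$ for all $t$; this is possible by stochastic monotonicity, as in \cite{Sar16}. Let $\tau:=\inf\{t\geq 0:X_2(t)=0\}$. Because $0\leq X_1\leq X_2$, both paths reach the origin simultaneously at $\tau$, and afterwards can be glued so that $X_1\equiv X_2$. For any measurable $f$ with $|f|\leq U$, monotonicity of $U$ together with the pathwise sandwich gives
\[
\bigl|\mathbb{E}[f(X_1(t))-f(X_2(t))]\bigr|
\leq \mathbb{E}\bigl[U(X_1(t))+U(X_2(t));\,t<\tau\bigr]
\leq 2\,\mathbb{E}\bigl[U(X_2(t));\,t<\tau\bigr],
\]
so it suffices to show $\mathbb{E}_{x}[U(X(t));\,t<\tau_0]\leq h(t)^{-1}V(x)$ for any starting point $x$.

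For the Lyapunov control, the claim is that $G(t\wedge\tau_0,V(X(t\wedge\tau_0)))$ is a local supermartingale with initial value $V(x)$. Differentiating $G(t,u)=\Psi(\Phi(u)+t)$ and using $\Psi'=\varphi\circ\Psi$ yields the two tidy identities $\partial_t G=\varphi(G)$ and $\partial_u G=\varphi(G)/\varphi(u)$. Applying It\^o's formula to $G(t,V(X(t)))$ along the semimartingale $V(X)$, the predictable drift works out to $\varphi(G)\bigl[1+\mathcal{L}V/\varphi(V)\bigr]$, which is $\leq 0$ by the $\varphi$-Lyapunov hypothesis $\mathcal{L}V\leq-\varphi(V)$ encoded in~\eqref{eq:phi-Lyapunov}. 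The remaining second-order and jump corrections are non-positive because $G(t,\cdot)$ inherits concavity from $\varphi$: one checks $\partial_u^2 G=\varphi(G)[\varphi'(G)-\varphi'(u)]/\varphi(u)^2\leq 0$, since $G\geq u$ whenever $t\geq 0$ and $\varphi'$ is non-increasing. Localizing the martingale part and passing to the limit via Fatou (permissible because $G\geq 0$) gives $\mathbb{E}[G(t\wedge\tau_0,V(X(t\wedge\tau_0)))]\leq V(x)$. Combining with $h(t)U(y)\leq G(t,V(y))$ from~\eqref{eq:product} and nonnegativity of $G$ on $\{t\geq\tau_0\}$,
\[
h(t)\,\mathbb{E}[U(X(t));\,t<\tau_0]\leq \mathbb{E}[G(t,V(X(t)));\,t<\tau_0]\leq V(x),
\]
which closes the reduction and thus proves (a).

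Part (b) follows by constructing random initial values $(\xi_1,\xi_2)$ with $\xi_i\sim\rho_i$ and $\max(\xi_1,\xi_2)\sim\rho_1\vee\rho_2$ via a quantile/stochastic-max coupling, conditioning on $(\xi_1,\xi_2)$, and averaging the bound from (a): $\mathbb{E}[V(\max(\xi_1,\xi_2))]=(\rho_1\vee\rho_2,V)$. Part (c) is the special case of (b) with $\rho_2=\pi$, combined with invariance $\pi P^t=\pi$. The hard part is making the supermartingale property rigorous: identifying the two partial-derivative formulas, verifying that $G(t,\cdot)$ is concave so the It\^o second-order and jump corrections have the right sign, and arranging a suitable localization so the expectation inequality survives. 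Once these three items are secured, everything else is bookkeeping.
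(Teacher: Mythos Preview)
Your proposal is correct and follows essentially the same route as the paper: reduce via the stochastic-ordering coupling to controlling $\mathbb{E}[U(X_2(t));\,t<\tau]$, then show $G(t\wedge\tau,V(X(t\wedge\tau)))$ is a nonnegative local supermartingale by computing $\partial_t G=\varphi(G)$, $\partial_u G=\varphi(G)/\varphi(u)$, and $\partial_u^2 G\leq 0$, apply Fatou, and invoke~\eqref{eq:product}. The only cosmetic difference is that the paper organizes the drift computation through the auxiliary supermartingale $N(t)=V(X(t\wedge\tau))+\int_0^{t\wedge\tau}\varphi(V(X(s)))\,\mathrm{d}s$ rather than writing the drift as $\varphi(G)\bigl[1+\mathcal LV/\varphi(V)\bigr]$ directly, which is marginally more robust when $V\notin\mathcal D(\mathcal L)$.
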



\begin{remark} In Theorem~\ref{thm:main}, we can let $U := 1$ and $h := \Psi$. Then we get convergence in the total variation norm. The rate of this convergence is given by $1/\Psi(t)$. This is stronger than in \cite{DFG09}, where the rate of convergence for the total variation norm is $1/\varphi(\Psi(t))$. Indeed, in the above example $\varphi(u) := \sqrt{u}$, and so $\varphi(\Psi(t)) = o(\Psi(t))$ as $t \to \infty$. 
\end{remark}

\begin{remark}
\label{rmk:perfect}
Let us discuss these results in exponential ergodicity case, when $\varphi(s) = ks,\, k > 0$. Then $\Phi(s) = k^{-1}\ln(s)$, and $\Psi(v) := e^{kv}$. Thus $G(t, u) = ue^{kt}$, and we can take $U := V$ and $r(t) := e^{kt}$. Here, we have perfect decomposition of $G$ into a product form.
\end{remark}

\begin{example}
Even if $\varphi$ is bounded, we can get nontrivial results. For example, $\varphi(x) = k > 0$ implies $\Phi(x) = k^{-1}(x-1)$ and $\Psi(x) = kx + 1$. Thus $G(t, x) = \Psi(\Phi(x)+t) = x + kt$, and we can let $h(t) := 2kt^{1/2}$, $U(x) := V^{1/2}(x)$, or use more general techniques from the Appendix.
\label{rmk:trivial}
\end{example}


\begin{remark}
The existence and uniqueness of the stationary distribution $\pi$ and the property $(\pi, V) < \infty$ can be obtained from the  Lyapunov conditions in their classic form, as in \cite{DFG09, MT93a, MT93b}. Often we can find (another version of) a Lyapunov function $\hat{V} : \mathbb R_+ \to [1, \infty)$ in $\mathcal D(\mathcal L)$ such that for constants $x_0, b > 0$:
\begin{equation}
\label{eq:old-L}
\mathcal L\hat{V}(x) \leqslant -\varphi(\hat{V}(x)) + b1_{[0, x_0]}(x).
\end{equation}
We also need the following {\it positivity property}: $P^t(x, B) > 0$ for every $t > 0$, $x \geqslant 0$, and every Borel set $B \subseteq \mathbb R_+$ of positive Lebesgue measure. Then the process has a unique stationary distribution $\pi$, and $(\pi, \hat{V}) < \infty$. This follows from \cite[Lemma 2.3, Theorem 2.6]{MyOwn10}; see also \cite{MT93a, MT93b}. In practice, for processes on $\mathbb R_+$, this function $\hat{V}$ can sometimes be constructed as $\hat{V}(x) := V(\psi(x))$, where $\psi : \mathbb R_+ \to \mathbb R_+$ is a nondecreasing function with $\psi(x) = 0$ for $x \leqslant x_1$ and $\psi(x) = x$ for $x \geqslant x_2$, where $0 < x_1 < x_2$; and, finally, $\psi(x) \leqslant x$ for all $x \geqslant 0$. Then $\hat{V}(x) \leqslant V(x) \leqslant C\hat{V}(x)$ for some constant $C > 1$, and $(\pi, \hat{V}) < \infty$ implies $(\pi, V)  < \infty$.
\label{rmk:old-L}
\end{remark}  

\section{Examples}

\subsection{Reflected diffusions} Take functions $g, \sigma : \mathbb R_+ \to \mathbb R$. For a one-dimensional $(\mathcal F_t)_{t \geqslant 0}$-Brownian motion $W = (W(t),\, t \geqslant 0)$, consider a stochastic differential equation (SDE): 
\begin{equation}
\label{eq:SDE}
\mathrm{d}Z(t) = g(Z(t))\,\mathrm{d}t + \sigma(Z(t))\,\mathrm{d}W(t).
\end{equation}

\begin{definition} A solution to the SDE~\eqref{eq:SDE} with reflection on $\mathbb R_+$ starting from $x \geqslant 0$ is defined as an adapted process $Z = (Z(t),\, t \geqslant 0)$ with a.s. continuous trajectories, such that
$$
Z(t) = x + \int_0^tg(Z(s))\,\mathrm{d}s + \int_0^t\sigma(Z(s))\,\mathrm{d}W(s) + \ell(t),\, t \geqslant 0,
$$
where $W$ is an $(\mathcal F_t)_{t \geqslant 0}$-Brownian motion, and $\ell = (\ell(t),\, t \geqslant 0)$ is a continuous nondecreasing process with $\ell(0) = 0$, increasing only when $Z(t) = 0$. It is also called a {\it reflected diffusion} on $\mathbb R_+$ with {\it drift} $g$ and {\it diffusion} $\sigma^2$.
\end{definition}

\noindent This process is well-defined for continuous $g, \sigma$, stochastically ordered and has generator 
$$
\mathcal Lf(x) = g(x)f'(x) + \frac12\sigma^2(x)f''(x),\ f \in C^2(\mathbb R_+),\ f'(0) = 0.
$$
Thus for all functions $f \in C^2(\mathbb R_+)$, even when $f'(0) \ne 0$ and $f$ is not in the domain of the generator, the following process is an $(\mathcal F_t)_{t \geqslant 0}$-supermartingale:
$$
f(Z(t\wedge\tau_0)) - \int_0^{t\wedge\tau_0}\mathcal L f(Z(s))\,\mathrm{d}s,\quad t \geqslant 0.
$$
We can prove that $\mathcal L V(x) \leqslant - \varphi(V(x))$ for $x > 0$, even when $V'(0) \ne 0$, and this would prove that $V$ is a $\varphi$-Lyapunov function. Assume for some constants $a, c > 0$ and $\alpha \in (0, 1)$,
\begin{equation}
\label{eq:drift}
g(x) \leqslant -a(1 + cx)^{\alpha-1},\quad x > 0.
\end{equation}

\begin{example} Take a function $V(x) = 1 + cx$. We get: $V'(x) = c$, $V''(x) = 0$, and thus
\begin{align*}
\mathcal LV(x) \leqslant - ac(1 + cx)^{\alpha-1} = -\varphi(V(x)), \quad \varphi(s) := acs^{1 - \alpha}.
\end{align*}
The function $\varphi$ is increasing and concave. We can compute
\begin{align}
\label{eq:G-SDER}
\begin{split}
\Phi(s) := \int_1^s&\frac{\mathrm{d}y}{\varphi(y)} = \frac1{ac\alpha}\left[s^{\alpha} - 1\right],\quad \Psi(v) := \left[ac\alpha v + 1\right]^{1/\alpha}; \\ G(t, x) &:= \Psi(\Phi(x) + t) = \left[ac\alpha t + x^{\alpha}\right]^{1/\alpha}. 
\end{split}
\end{align}
Thus we can deduce~\eqref{eq:product} from~\eqref{eq:inv-Y} from the Appendix:
\begin{align*}
\Psi(t, V(x)) &=  \left[ac\alpha\cdot  t + a^{\alpha}(1 + cx)^{\alpha}\right]^{1/\alpha} \geqslant
h(t)U(x), \\
h(t) &:= \left[H^{-1}(ac\alpha\cdot  t)\right]^{1/\alpha}, \quad U(x) := \left[K^{-1}(a^{\alpha}(1 + cx)^{\alpha})\right]^{1/\alpha}.
\end{align*}
For the example in~\eqref{eq:p-q}, we get: $H^{-1}(x) = p^{1/p}x^{1/p}$ and $K^{-1}(y) := q^{1/q}y^{1/q}$, thus 
$$
h(t) := (acp\alpha)^{1/\alpha p}t^{1/(\alpha p)},\quad 
U(x) := q^{1/(q\alpha)}a^{1/\alpha}(1 + cx)^{1/q}.
$$
If $p$ is larger, then $q$ is smaller. This illustrates the general principle: Taking a stronger $\norm{\cdot}_U$, for a larger function $U$, leads to slower convergence rate $h$, and vice versa. 
\label{exm:linear}
\end{example}

\begin{example} Now assume the condition~\eqref{eq:drift} holds, and moreover $\sigma(x) \equiv \sigma = \mathrm{const}$. Take a function $V(x) := (1 + \lambda x)^{\beta}$ for $\beta > 1$ and $\lambda \geqslant c$ to be determined later. Then we have: 
\begin{align*}
V'(x) &= \beta\lambda\cdot(1 + \lambda x)^{\beta - 1},\quad V''(x) = \lambda^2\beta(\beta - 1)(1 + \lambda x)^{\beta - 2},\\
\mathcal LV(x) \leqslant -a\beta&\lambda(1 + cx)^{\alpha-1}(1 + \lambda x)^{\beta - 1}  + \frac12\lambda^2\sigma^2\beta(\beta - 1)(1 + \lambda x)^{\beta - 2} \leqslant -A(\lambda)(1 + \lambda x)^{\beta - \alpha},\\ & A(\lambda) := a\beta\lambda -  \frac12\sigma^2\beta(\beta - 1)\lambda^2,\quad x > 0.
\end{align*}
Thus we can take $\varphi(s) := A(\lambda)s^{1 - \alpha/\beta}$. Since $\alpha < 1 < \beta$, then $0 < 1 - \alpha/\beta < 1$ and this is a concave increasing function. This is only true if we can make sure that $A(\lambda) > 0$. We need  
\begin{equation}
\label{eq:lambda}
\lambda < \frac{2a}{\sigma^2(\beta - 1)}.
\end{equation}
But on the other hand, we need $\lambda \geqslant c$. We got a prerequisite inequality:
\begin{equation}
\label{eq:beta}
\frac{2a}{\sigma^2(\beta - 1)} > c\, \Leftrightarrow\, \beta < 1 + \frac{\sigma^2}{2ac}.
\end{equation}
If we satisfy conditions~\eqref{eq:lambda} and~\eqref{eq:beta}, then we can constuct this Lyapunov function. It gives us better rates of convergence and stronger $U$-norm $\norm{\cdot}_U$. But we did this under~\eqref{eq:drift} combined with constant diffusion coefficient. If we have similar estimates for a non-constant diffusion coefficient, we need to modify our Lyapunov function. 
\end{example}

\begin{remark} Let us follow Remark~\ref{rmk:old-L} to show for both examples that the stationary distribution $\pi$ exists and is unique, and $(\pi, V) < \infty$. For each case, taking $\hat{V}(x) := V(\psi(x))$ as at the end of Section 2, we get a function $\hat{V}$ such that $\hat{V}'(0) = 0$ and the condition~\eqref{eq:old-L} holds. Since the process $X$ has the positivity property $P^t(x, B) > 0$, it has a unique stationary distribution $\pi$ which satisfies $(\pi, \hat{V}) < \infty$. Since $\hat{V}(x) = V(x)$ for large enough $x$, we can automatically conclude that $(\pi, V) < \infty$. 
\end{remark}

\subsection{Reflected jump-diffusions} In addiition to the above notation, take a family $(\nu_x)_{x \geqslant 0}$ of finite Borel measures on $\mathbb R_+$ with $\nu_x(\mathbb R_+) = M < \infty$ for all $x \geqslant 0$. Then we can augment the above reflected diffusion with these jumps: They occur with intensity $M$, and the destination of a jump from $x \geqslant 0$ is distributed as $M^{-1}\nu_x(\cdot)$. As long as the family $(\nu_x)_{x \geqslant 0}$ is weakly continuous: $\nu_y \to \nu_x$ weakly as $y \to x$, and $g, \sigma$ satisfy the same assumptions as in the previous subsection, this process can be constructed by \textit{piecing out}, see \cite{Sar16}, and it exists in the weak sense and is unique in law. This is a Markov process with the following generator, for $f \in C^2(\mathbb R_+)$ with $f'(0) = 0$:
\begin{equation}
\label{eq:L-jumps}
\mathcal Lf(x) =  g(x)f'(x) + \frac12\sigma^2(x)f''(x) + \int_0^{\infty}[f(y) - f(x)]\,\nu_x(\mathrm{d}y).
\end{equation}
It is shown in \cite{Sar16} that if this family of measures is stochastically ordered, then the process $Z$ is also stochastically ordered. Take a Lyapunov function $V(x) = 1 + cx$ for $c > 0$. Applying the generator from~\eqref{eq:L-jumps} even though $V'(0) \ne 0$, we get:
$$
\mathcal V(x) = cm(x),\quad m(x) := g(x) + \int_0^{\infty}(y-x)\,\nu_x(\mathrm{d}y).
$$
One can think of $m(x)$ as ``average drift'' at the point $x \geqslant 0$ which is the usual drift $g(x)$ combined with the ``implied drift'' created by jumps. This is the ``velocity'' with which the process ``wants to move'' to the right while at location $x$. If $m(x)$ instead of $g(x)$ satisfies~\eqref{eq:drift}, then we can make the same conclusions as for the reflected diffusion above. We need to show that the stationary distribution $\pi$ exists, is unique, and satisfies $(\pi, V) < \infty$, as in Remark~\ref{rmk:old-L}. 

Assume that there exist $0 < c_1 < c_2$ such that for $x \geqslant c_2$, $\nu_x([0, c_1]) = 0$. Take $\psi(x) = 0$ for $x \leqslant c_1/2$ and $\psi(x) = x$ for $x \geqslant c_1$. Then for $x \geqslant c_2$ we have:
\begin{align*}
\int_0^{\infty}&(\hat{V}(y) - \hat{V}(x))\,\nu_x(\mathrm{d}y)  = \int_0^{\infty}(V(\psi(y)) - V(\psi(x)))\,\nu_x(\mathrm{d}y) \\ & = \int_{c_1}^{\infty}(V(\psi(y)) - V(\psi(x)))\,\nu_x(\mathrm{d}y) = \int_{c_1}^{\infty}(V(y) - V(x))\,\nu_x(\mathrm{d}y) \\ & = \int_0^{\infty}(V(y) - V(x))\,\nu_x(\mathrm{d}y);\\
g(x)&\hat{V}'(x) + \frac12\sigma^2(x)\hat{V}''(x) = g(x)V'(x) + \frac12\sigma^2(x)V''(x);
\end{align*}
and thus the generator formally applied to $V$ will be the same as if applied to $\hat{V} \in \mathcal D(\mathcal L)$:
$$
\mathcal L\hat{V}(x) = \mathcal LV(x),\quad x \geqslant c_2.
$$
If $\mathcal L\hat{V}(x)$ is bounded on $[0, c_2]$ (it is easy to check in practice) then we have the condition~\eqref{eq:old-L}. Together with positivity property, proved in \cite{Sar16}, this implies existence and uniqueness of the stationary distribution $\pi$ and $(\pi, V) < \infty$. 

\begin{example}
Try the following process: $g(x) = -3(x+1)^{-0.5}$, $M = 2$ and $\nu_x$ is defined as
$$
\nu_x(\mathrm{d}y) = \lambda(x)\exp\left[\lambda(x)(x-y)\right]1_{\{y > x\}}\,\mathrm{d}y,\quad \lambda(x) := (x+1)^{0.5}.
$$ 
In other words, the process jumps with constant intensity $2$ to the right, and the displacement is distributed as an exponential random variable with rate $\lambda(x)$ (and therefore mean $\lambda^{-1}(x)$). Then the average rate of displacement from jumps is $2\lambda^{-1}(x)$. Thus
$$
m(x) = -3(x+1)^{-0.5} + 2(x+1)^{-0.5} = -(x+1)^{-0.5},\quad x \geqslant 0,
$$
This gives us subexponential convergence, as in Example~\ref{exm:linear} above.
\end{example}

We could also weaken the assumption of finite first moment for $\nu_x$ Instead of stating a general result, we consider a particular case. Assume that there exists a finite measure $\mu$ on the positive half-line $\mathbb R_+$ such that for all $B \subseteq \mathbb R_+$, 
$$
\nu_x(x + B) = \mu(B),\, \nu_x[0, x) = 0,\, x \in \mathbb R_+.
$$
We fix a $\beta \in (0, 1)$ and assume
$$
m_{\beta} := \int_0^{\infty}x^{\beta}\mu(\mathrm{d}x) < \infty.
$$
In addition, assume there exist constants $C, x_0 > 0$ such that $g(x) \leqslant -Cx^{1 - \beta}$ for $x \geqslant x_0$. Then we can take a Lyapunov function $V(x) = 1 + x^{\beta}$. We get: For $x \geqslant x_0$, 
\begin{align*}
\mathcal L V(x) &= g(x)V'(x) + \frac{\sigma^2(x)}2V''(x) + \int_0^{\infty}[V(x+y) - V(x)]\mu(\mathrm{d}y) \\ & = g(x)\beta x^{\beta - 1} + \frac{\sigma^2(x)}2\beta(\beta - 1)x^{\beta - 2} + \int_0^{\infty}\left[(x+y)^{\beta} - x^{\beta}\right]\mu(\mathrm{d}y) \\ & \leqslant -C + \int_0^{\infty}y^{\beta}\mu(\mathrm{d}y) = -C + m_{\beta}.
\end{align*}
The last inequality follows from the estimate $(x+y)^{\beta} \leqslant x^{\beta} + y^{\beta}$ for $x, y \geqslant 0$. If $m_{\beta} < C$, then Theorem~\ref{thm:main} holds with $\varphi$ as in Remark~\ref{rmk:trivial}.

\subsection{Reflected L\'evy processes} Any L\'evy process on the real line can be decomposed into the sum of two components: a Brownian motion with constant drift and diffusion (the continuous component), and a pure jump L\'evy process $\mathcal J$. In this article, we assume this pure jump process is nondecreasing. Its jumps are governed by a {\it spectral measure} $\mu$. If the spectral measure is finite, then there are a.s. finitely many jumps (a Poisson number) during any time interval $[0, t]$, and $\mathcal J$ is, in fact, a compound Poisson process. However, if $\mu(\mathbb R_+) = \infty$, then this process $\mathcal J$ makes infinitely many jumps during any time interval $[0, t]$. This $\mu$ is a $\sigma$-finite Borel measure on $\mathbb R_+$. This measure must satisfy 
\begin{equation}
\label{eq:finite}
\int_0^{\infty}(1\wedge x)\,\mu(\mathrm{d}x) < \infty.
\end{equation}
We take a reflected version $X$ of $L$ on the half-line using the Skorohod reflection mapping
$$
X(t) := L(t) + \sup\limits_{0 \leqslant s \leqslant t}\left[\max(-L(s), 0)\right].
$$
The resulting process will have values in $\mathbb R_+$ and behave like a L\'evy process $L$ as long as it is strictly inside $\mathbb R_+$. When it hits $0$, it is reflected back. If $\mu$ is finite, we are back in the case of reflected jump-diffusions, discussed in the previous subsection. In this subsection, we are mostly interested in the case when $\mu(\mathbb R_+) = \infty$. This process is defined by a triple $(g, \sigma, \mu)$ where $g \in \mathbb R_+$ is a drift, $\sigma > 0$ is a diffusion coefficient, and it can be decomposed as
\begin{equation}
\label{eq:L-decomp}
L(t) = L(0) + gt + \sigma W(t) + \mathcal J(t),\quad t \geqslant 0,
\end{equation}
where $W$ is a Brownian motion. The generator of $L$ is given by
\begin{equation}
\label{eq:Levy-right}
\mathcal L f(x) = gf'(x) + \frac12\sigma^2f''(x) + \int_0^{\infty}\left[f(x + z) - f(x)\right]\,\mu(\mathrm{d}z).
\end{equation}
Impose a condition which is a bit stronger than~\eqref{eq:finite}, that the first moment is finite:
\begin{equation}
\label{eq:moment1}
m_1 := \int_0^{\infty}z\,\mu(\mathrm{d}z) < \infty.
\end{equation}
The reflected version has the same generator~\eqref{eq:Levy-right}, but for functions $f \in C^2(\mathbb R_+)$ with $f'(0) = 0$. Formally apply the generator to the function $V(x) := 1 + x$, although $V'(0) \ne 0$. Then $\mathcal L V(x) = g + m_1$. If $g < -m_1$, then this process is ergodic. We prove this by taking $\hat{V}(x) := V(\psi(x))$ as in Remark~\ref{rmk:old-L}. Then we can apply Theorem~\ref{thm:main} using Remark~\ref{rmk:trivial}. Similarly to the previous subsection, we get  Moreover, assume another condition stronger than~\eqref{eq:moment1}:
\begin{equation}
\label{eq:moment-exp}
\int_1^{\infty}e^{\lambda_0x}\,\mu(\mathrm{d}x) < \infty.
\end{equation}
Take $V(x) := e^{\lambda x}$ for $\lambda \in (0, \lambda_0)$. Then 
\begin{align*}
\mathcal LV(x) &= V(x)k(\lambda),\quad
k(\lambda)  := \lambda g + \frac{\sigma^2\lambda^2}2 + \int_0^{\infty}\left[e^{\lambda z} - 1\right]\,\mu(\mathrm{d}z).
\end{align*}
And the integral in the right-hand side is finite because of~\eqref{eq:moment1} and~\eqref{eq:moment-exp}. Similarly to \cite{Sar16}, we prove in Lemma~\ref{lemma:lambda} that there exists a $\lambda > 0$ such that $k(\lambda) < 0$. Reflected L\'evy process also has positivity property, by Lemma~\ref{lemma:positive}. Thus by using Remark~\ref{rmk:old-L} we get that there exists a unique stationary distribution $\pi$, which satisfies $(\pi, V) < \infty$, and for any such $\lambda$ we get:
$$
\norm{P^t(x, \cdot) - \pi(\cdot)}_{V} \leqslant (V(x) + (\pi, V))e^{k(\lambda)t},\quad x, t \geqslant 0.
$$
This is the same exponential rate of convergence as in \cite{Sar16} for reflected L\'evy processes with finite spectral measure (which is a compound Poisson process plus a Brownian motion).  

\begin{lemma}Under conditions~\eqref{eq:moment1} and~\eqref{eq:moment-exp}, if $g < -m_1$, then $k(\lambda) < 0$ for some $\lambda > 0$.
\label{lemma:lambda}
\end{lemma}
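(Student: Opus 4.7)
\medskip

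\noindent\textbf{Proposal for the proof of Lemma~\ref{lemma:lambda}.} The plan is to observe that $k(0)=0$ and argue that $k'(0) = g + m_1 < 0$, so that $k(\lambda) < 0$ for all sufficiently small positive $\lambda$. Concretely, I would rewrite
\begin{equation*}
k(\lambda) = \lambda(g + m_1) + \frac{\sigma^2\lambda^2}{2} + R(\lambda),\qquad R(\lambda) := \int_0^{\infty}\bigl(e^{\lambda z} - 1 - \lambda z\bigr)\,\mu(\mathrm{d}z),
\end{equation*}
which is legitimate because condition~\eqref{eq:moment1} makes the subtracted term $\lambda z$ integrable against $\mu$ and the integrand is nonnegative. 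Since $g + m_1 < 0$ by assumption, it suffices to show $R(\lambda) = O(\lambda^2)$ as $\lambda \downarrow 0$; then the linear term dominates and $k(\lambda) < 0$ for small $\lambda > 0$.

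To bound $R(\lambda)$, I would use the elementary inequality $e^u - 1 - u \leqslant \tfrac{u^2}{2}e^u$ for $u \geqslant 0$, giving
\begin{equation*}
0 \leqslant R(\lambda) \leqslant \frac{\lambda^2}{2}\int_0^{\infty}z^2 e^{\lambda z}\,\mu(\mathrm{d}z).
\end{equation*}
Fix any $\lambda_1 \in (0, \lambda_0)$ and restrict attention to $\lambda \in (0, \lambda_1]$. Split the integral at $z = 1$. On $[0,1]$, $z^2 e^{\lambda z} \leqslant e z$, so the piece is bounded by $e\,m_1 < \infty$ by~\eqref{eq:moment1}. On $[1,\infty)$, we bound $z^2 e^{\lambda_1 z} \leqslant C e^{\lambda_0 z}$ where $C := \sup_{z \geqslant 1} z^2 e^{(\lambda_1 - \lambda_0)z} < \infty$ since $\lambda_1 < \lambda_0$, and so the piece is bounded by $C \int_1^{\infty} e^{\lambda_0 z}\,\mu(\mathrm{d}z) < \infty$ by~\eqref{eq:moment-exp}. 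Combining, $\int_0^{\infty} z^2 e^{\lambda z}\,\mu(\mathrm{d}z)$ is bounded uniformly in $\lambda \in (0, \lambda_1]$, so $R(\lambda) = O(\lambda^2)$.

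Putting it together, for $\lambda \in (0, \lambda_1]$ we have
\begin{equation*}
k(\lambda) \leqslant \lambda(g + m_1) + C'\lambda^2
\end{equation*}
for some constant $C' > 0$ depending only on $\sigma, \mu, \lambda_1$. Since $g + m_1 < 0$, choosing $\lambda > 0$ small enough makes the right-hand side strictly negative, which proves the lemma.

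The only mildly delicate point is the uniform integrability argument bounding $\int z^2 e^{\lambda z}\,\mu(\mathrm{d}z)$ on a neighborhood of $0$; everything else is a one-line Taylor expansion. I would not expect any conceptual obstacle, only the need to stay slightly below $\lambda_0$ to leverage~\eqref{eq:moment-exp}.
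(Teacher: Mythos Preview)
Your proof is correct and follows essentially the same approach as the paper: both arguments boil down to showing $k'(0) = g + m_1 < 0$ via a dominated convergence / uniform integrability estimate using conditions~\eqref{eq:moment1} and~\eqref{eq:moment-exp}. The only cosmetic difference is that the paper bounds $z e^{\lambda z} \leqslant C_1 z + C_2 e^{\lambda_0 z}$ to justify differentiating under the integral, whereas you bound the second-order remainder $R(\lambda) = O(\lambda^2)$ directly via $z^2 e^{\lambda z}$; these are the same idea at one order higher.
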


\begin{proof}
This follows from differentiability under the integral with respect to $\lambda$ at $\lambda = 0$, which we shall prove now. For all $\lambda \in (0, \lambda_0)$, there exist constants $C_1, C_2 > 0$ such that for all $z \geqslant 0$, we get: $ze^{\lambda z} \leqslant C_1z + C_2e^{\lambda_0z}$. They can be taken independent of 
$\lambda$. From this elementary estimate combined with~\eqref{eq:moment1} and~\eqref{eq:moment-exp}, we get:
$$
\int_0^{\infty}\left[ze^{\lambda z}\right]\,\mu(\mathrm{d}z) < \infty.
$$
From here, we can deduce by Lebesgue dominated convergence theorem that 
$$
\frac{\mathrm{d}}{\mathrm{d}\lambda}\int_0^{\infty}\left[e^{\lambda z} - 1\right]\,\mu(\mathrm{d}z) = \int_0^{\infty}\frac{\partial}{\partial\lambda}\left[e^{\lambda z} - 1\right]\,\mu(\mathrm{d}z) = \int_0^{\infty}\left[ze^{\lambda z}\right]\,\mu(\mathrm{d}z).
$$
Thus, taking derivative of $k(\lambda)$ and letting $\lambda = 0$, we get: 
$$
k'(0) = g + \int_0^{\infty}z\,\mu(\mathrm{d}z) = g + m_1 < 0.
$$
This, together with previous computations, completes the proof.
\end{proof}

\begin{lemma} For any $t > 0$, $x \geqslant 0$, and a set $B \subseteq \mathbb R_+$ of positive Lebesgue measure, the transition kernel of the reflected process satisfies $P^t(x, B) > 0$.
\label{lemma:positive}
\end{lemma}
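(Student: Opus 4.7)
The plan is to exploit the independence of the Brownian component $\sigma W$ (with $\sigma > 0$) from the pure-jump subordinator $\mathcal J$, together with the smoothing effect of the Gaussian noise, to show that $P^t(x, \cdot)$ admits a strictly positive density on $(0, \infty)$. The starting point is~\eqref{eq:L-decomp}: $L(s) = x + g s + \sigma W(s) + \mathcal J(s)$, and the Skorohod reflection formula $X(s) = L(s) + \sup_{u \leq s} (-L(u))^+$. I would condition on $\mathcal F^{\mathcal J}_t := \sigma(\mathcal J(u) : u \leq t)$; under this conditioning, $\eta(s) := g s + \mathcal J(s)$ becomes a deterministic càdlàg function, a.s.\ bounded on $[0, t]$ by~\eqref{eq:finite}, and $L(s) = x + \eta(s) + \sigma W(s)$.

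The main step will be to show that, for a.e.\ realization of $\mathcal J$, the conditional law of $X(t)$ given $\mathcal F^{\mathcal J}_t$ is absolutely continuous with respect to Lebesgue measure on $(0, \infty)$, with a strictly positive density. For absolute continuity, $X(t)$ is a measurable function of the pair $(L(t), \inf_{s \leq t} L(s))$, whose joint conditional law is a functional of the Brownian path and is absolutely continuous on its natural domain: for continuous $\eta$ this follows from the classical reflection-principle joint density for Brownian motion; for general càdlàg $\eta$ one approximates by truncating the jumps of $\mathcal J$ below size $\varepsilon$ to obtain piecewise-constant $\eta_n$, applies Chapman--Kolmogorov together with the reflection principle on each constancy interval, and passes to the limit using $J_1$-continuity of the Skorohod reflection map. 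For strict positivity at an arbitrary $y > 0$, invoke the Brownian support theorem to produce a continuous $\phi : [0, t] \to \mathbb R$ with $\phi(0) = 0$ and a $\delta > 0$ such that on the tube event $E := \{\sup_{s \leq t} |W(s) - \phi(s)| < \delta\}$ (which has positive probability), the path $L$ stays above $\sigma \delta$ throughout $[0, t]$, so no reflection occurs and $L(t) \in (y - \sigma\delta, y + \sigma\delta)$. Boundedness of $\eta$ allows such a $\phi$ to be constructed; combined with continuity of the density coming from the Gaussian smoothing of the final Brownian increment, this yields a strictly positive density in a neighborhood of every $y > 0$.

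Integrating over $\mathcal J$ then gives $P^t(x, B) = \mathbb E[\mathbb P(X(t) \in B \mid \mathcal F^{\mathcal J}_t)] > 0$ for any Borel $B \subseteq \mathbb R_+$ of positive Lebesgue measure, as the integrand is strictly positive a.s. The main obstacle is the infinite-activity regime $\mu(\mathbb R_+) = \infty$, where $\eta$ carries a dense set of jumps and the classical reflection-principle formulas are stated only for continuous inputs; the truncation-and-limit argument above is the way to handle this, leaning on $J_1$-continuity of the Skorohod map. The degenerate initial value $x = 0$ presents no additional difficulty, since the initial boost built into $\phi$ still lifts $L$ above zero on $(0, t]$.
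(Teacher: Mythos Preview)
Your approach is considerably more elaborate than the paper's. The paper disposes of the lemma in a few lines: for $x>0$ one may assume $B\subseteq[\delta,\infty)$, observe that with positive probability the process does not hit $0$ by time $t$ (on which event $X$ coincides with the unreflected L\'evy process $L$), and then invoke the positivity property of the unreflected process; the case $x=0$ is reduced to $x>0$ by Chapman--Kolmogorov at time $t/2$. Your tube-event construction---choose a Brownian path that keeps $L$ strictly positive and lands near $y$---is exactly this ``no reflection occurs'' idea, just spelled out in more detail. The conditioning on $\mathcal J$ and the attempt to produce a strictly positive conditional density are not needed for the lemma as stated.

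There is also a genuine gap in the density portion of your sketch. Approximating $\mathcal J$ by finite-activity subordinators and using $J_1$-continuity of the Skorohod map gives weak convergence $X^n(t)\Rightarrow X(t)$, but weak convergence does not transfer absolute continuity (let alone positivity of a density) to the limit; you would need a uniform bound on the approximating densities or a direct argument, neither of which you supply. Likewise, the appeal to ``Gaussian smoothing of the final Brownian increment'' is not straightforward once reflection is present, since $X(t)$ depends on the entire path of $W$ through the running minimum, not just on $W(t)$. The clean route is the paper's: work on the event $\{\inf_{s\le t}L(s)>0\}$, where $X(t)=L(t)$ and no path-dependence through the reflection survives, and argue directly that $\mathbb P\bigl(L(t)\in B,\ \inf_{s\le t}L(s)>0\bigr)>0$.
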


\begin{proof}
Assume $x > 0$. Without loss of generality, we can assume $B \subseteq [\delta, \infty)$ for some $\delta > 0$. Then with probability $p > 0$, this process never hits $0$ until time $t$. In this case, it behaves as a non-reflected L\'evy process, which has positivity property $Q^t(x, B) > 0$. Thus $P^t(x, B) \geqslant pQ^t(x, B) > 0$. Next, if $x = 0$, then $P^{t/2}(0, (0, \infty)) > 0$, and thus $P^t(x, B) \geqslant \int_0^{\infty}P^{t/2}(y, B)P^{t/2}(0, \mathrm{d}y) > 0$ (the integral of a positive function over a positive measure is positive). Thus we reduced the case $x = 0$ to the case $x > 0$.
\end{proof}

\section{Proof of Theorem~\ref{thm:main}}

\subsection{Overview of the proof} The main idea is similar to \cite[Theorem 4.1, Theorem 5.2]{Sar16}. It is known from \cite[Theorem 5]{Order} that if $X$ has trajectories which are a.s. right-continuous with left limits, then being stochastically ordered is equivalent to the following statement: For every $0 \leqslant x_1 \leqslant x_2$, there exists a {\it coupling} of two versions $X_1$ and $X_2$ of $X$, starting from $X_1(0) = x_1$ and $X_2(0) = x_2$: A probability space and copies $X_1$ and $X_2$ defined on this space such that $X_1(t) \leqslant X_2(t)$ for all $t \geqslant 0$ a.s. Let us define a coupling time $\tau_0 := \inf\{t \geqslant 0 \mid X_1(t) = X_2(t)\}$. We can assume $X_1(t) = X_2(t)$ for $t > \tau_0$, so after this coupling time, the processes coincide. For any function $g : \mathbb R_+ \to \mathbb R$ with $|g| \leqslant U$, we get (the last inequality from nondecreasing $U$):
\begin{align}
\label{eq:first-est}
\begin{split}
h(t)&\left|\mathbb E[g(X_1(t))] - \mathbb E[g(X_2(t))]\right| = h(t)\left|\mathbb E[g(X_1(t))]1_{\{\tau_0 > t\}} - \mathbb E[g(X_2(t))1_{\{\tau_0 > t\}}]\right| \\ & \leqslant h(t)\left(\mathbb E\left[U(X_1(t))1_{\{\tau_0 > t\}}\right] + \mathbb E\left[U(X_2(t))1_{\{\tau_0 > t\}}\right]\right) \leqslant 2\mathbb E\left[h(t)U(X_2(t))1_{\{\tau_0 > t\}}\right]. 
\end{split}
\end{align} 
Now we define $\tau := \inf\{t \geqslant 0\mid X_2(t) = 0\}$. Then $X_1(\tau) \leqslant X_2(\tau) = 0$ and thus $X_1(\tau) = 0$. Therefore, $\tau \geqslant \tau_0$ a.s. Combining this observation with~\eqref{eq:first-est}, we get:
\begin{equation}
\label{eq:modified-first-est}
h(t)\left|\mathbb E[g(X_1(t))] - \mathbb E[g(X_2(t))]\right| \leqslant 2\mathbb E\left[h(t)U(X_2(t))1_{\{\tau > t\}}\right]. 
\end{equation}

\begin{lemma} The function $G$ satisfies the boundary conditions 
$$
G(t, 1) = \Psi(t),\, t \geqslant 0;\quad G(0, u) = u,\, u \geqslant 1,
$$
and for $t \geqslant 0,\, u \geqslant 1$ the following equation and inequalities hold:
\begin{equation}
\label{eq:results}
\frac{\partial G}{\partial t}(t, u) = \varphi(u)\frac{\partial G}{\partial u}(t, u),\quad \frac{\partial G}{\partial u}(t, u) \geqslant 0,\quad \frac{\partial^2G}{\partial u^2}(t, u) \leqslant 0.
\end{equation}
\label{lemma:G}
\end{lemma}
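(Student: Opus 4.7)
\medskip
\noindent\textbf{Proof proposal for Lemma~\ref{lemma:G}.}

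The plan is to verify everything by direct calculus from the definition $G(t,u) = \Psi(\Phi(u)+t)$, using two elementary ingredients: the identity $\Phi(1) = 0$ (which is immediate from~\eqref{eq:Phi-def}), and the inverse‐function formula
\[
\Psi'(v) \;=\; \frac{1}{\Phi'(\Psi(v))} \;=\; \varphi(\Psi(v)),
\]
valid because $\Phi'(s) = 1/\varphi(s)$ and $\Psi = \Phi^{-1}$. The two boundary conditions then drop out: $G(t,1) = \Psi(\Phi(1)+t) = \Psi(t)$ and $G(0,u) = \Psi(\Phi(u)) = u$.

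For the first‐order part, I would simply chain‐rule:
\[
\frac{\partial G}{\partial t}(t,u) \;=\; \Psi'(\Phi(u)+t),
\qquad
\frac{\partial G}{\partial u}(t,u) \;=\; \Psi'(\Phi(u)+t)\,\Phi'(u) \;=\; \frac{\Psi'(\Phi(u)+t)}{\varphi(u)}.
\]
Dividing gives the claimed PDE $\partial_t G = \varphi(u)\,\partial_u G$. Since $\varphi > 0$ on $[1,\infty)$ and $\Psi$ is strictly increasing (hence $\Psi' \geq 0$), the nonnegativity $\partial_u G \geq 0$ is immediate.

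The only nontrivial step is $\partial_{uu} G \leq 0$. I would differentiate $\partial_u G = \varphi(G)/\varphi(u)$ (using $\Psi'(v) = \varphi(\Psi(v))$ at $v=\Phi(u)+t$, so $\Psi'(\Phi(u)+t) = \varphi(G(t,u))$), which yields
\[
\frac{\partial^2 G}{\partial u^2}(t,u) \;=\; \frac{\varphi'(G(t,u))\,\partial_u G(t,u)\cdot \varphi(u) \;-\; \varphi(G(t,u))\,\varphi'(u)}{\varphi(u)^2}
\;=\; \frac{\varphi(G(t,u))\bigl[\varphi'(G(t,u)) - \varphi'(u)\bigr]}{\varphi(u)^2}.
\]
Here I used $\partial_u G = \varphi(G)/\varphi(u)$ again to simplify. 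The sign now follows from two monotonicity facts: (i) $G(t,u) \geq u$ for all $t \geq 0$, because $\Psi$ is increasing and $\Phi(u)+t \geq \Phi(u)$; and (ii) $\varphi'$ is nonincreasing, because $\varphi$ is concave. Together these give $\varphi'(G(t,u)) \leq \varphi'(u)$, and the bracket is nonpositive while the remaining factors are nonnegative. This is the main (and really only) computational hurdle; if $\varphi$ fails to be $C^1$ one can replace $\varphi'$ by one‐sided derivatives or by the monotone difference quotients guaranteed by concavity, but the argument is otherwise unchanged.
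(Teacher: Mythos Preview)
Your proof is correct and follows essentially the same route as the paper: the same inverse-function identity $\Psi' = \varphi\circ\Psi$, the same chain-rule computations for $\partial_t G$ and $\partial_u G$, and the same quotient-rule calculation for $\partial_{uu}G$ followed by the observation that $G(t,u)\geq u$ combined with $\varphi'$ nonincreasing. Your final simplification to $\varphi(G)\bigl[\varphi'(G)-\varphi'(u)\bigr]/\varphi(u)^2$ is one algebraic step cleaner than the paper's version, but otherwise the arguments are identical.
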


This technical lemma was partially proved in \cite{DFG09}, but we want to collect these results and give (a straightforward) proof for the sake of completeness. The key part of the proof of Theorem~\ref{thm:main} is the following lemma. Take a copy of $X$ starting from $X(0) = x_0 \geqslant 0$.

\begin{lemma} The following process $K = (K(t),\, t \geqslant 0)$ is a local $(\mathcal F_t)_{t \geqslant 0}$-supermartingale:
$$
K(t) := G(t\wedge\tau,\, V(X(t\wedge\tau))),\, t \geqslant 0.
$$
\label{lemma:main}
\end{lemma}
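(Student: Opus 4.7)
The plan is to apply It\^o's formula to the $C^{1,2}$ function $G$ along the semimartingale $V(X(\cdot))$ stopped at $\tau$, and then invoke the identity $\partial_t G(t,u) = \varphi(u)\,\partial_u G(t,u)$ from Lemma~\ref{lemma:G} to merge the explicit time-derivative term with the drift of $V(X)$ into a single stochastic integral against the $\varphi$-Lyapunov local supermartingale. This is the algebraic reason for casting Lemma~\ref{lemma:G} in the form it takes.

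In more detail, It\^o's formula produces four contributions to $K(t) - K(0)$: a $\partial_t G(s, V(X(s-)))\,ds$ integral, a $\partial_u G(s, V(X(s-)))\,dV(X(s))$ integral, a quadratic variation term with integrand $\tfrac{1}{2}\partial_{uu} G$ against $d\langle V(X)^c\rangle_s$, and a sum of jump corrections of the form $G(s, V(X(s))) - G(s, V(X(s-))) - \partial_u G(s, V(X(s-)))\,\Delta V(X(s))$. Substituting $\partial_t G = \varphi(u)\,\partial_u G$ fuses the first two pieces into
$$
\int_0^{t \wedge \tau} \partial_u G(s, V(X(s-)))\,dY(s), \qquad Y(s) := V(X(s \wedge \tau)) + \int_0^{s \wedge \tau} \varphi(V(X(u)))\,du,
$$
and by the $\varphi$-Lyapunov hypothesis~\eqref{eq:phi-Lyapunov} the process $Y$ is a local supermartingale. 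Since $\partial_u G \geqslant 0$ and is locally bounded along the stopped path, the stochastic integral is itself a local supermartingale. The quadratic variation term is nonpositive pathwise because $\partial_{uu} G \leqslant 0$, and each jump correction is nonpositive by concavity of $u \mapsto G(s, u)$ (again from Lemma~\ref{lemma:G}). Adding a local supermartingale and two pathwise nonincreasing processes that vanish at $t = 0$ yields the desired conclusion that $K$ is a local supermartingale.

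The main obstacle is purely technical and explains the stopping at $\tau$. The Lyapunov functions appearing in Section~3 have $V'(0) \ne 0$ and therefore do not lie in $\mathcal D(\mathcal L)$, which for reflected processes requires $f'(0) = 0$; consequently one cannot directly write a Dynkin decomposition of $V(X)$ globally on $\mathbb R_+$. Stopping before the first hit of $0$ excises the reflecting boundary, the boundary local time drops out of It\^o's decomposition on $[0,\tau]$, and the Lyapunov property is postulated on precisely this interval. A standard localization argument, via stopping times making $V(X(\cdot \wedge \tau))$ and $\partial_u G$ bounded along the path, then converts the pathwise estimates above into genuine local-supermartingale statements, completing the proof.
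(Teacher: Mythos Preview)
Your proof is correct and follows essentially the same approach as the paper: apply It\^o's formula to $G(t,V(X(t)))$ on $[0,\tau]$, use the identity $\partial_t G = \varphi(u)\,\partial_u G$ from Lemma~\ref{lemma:G} to merge the time-drift with the $dV(X)$ integral into a stochastic integral of the nonnegative integrand $\partial_u G$ against the $\varphi$-Lyapunov local supermartingale, and discard the second-order terms by concavity of $G$ in $u$. Your version is in fact slightly more careful than the paper's, which writes only the continuous It\^o formula and does not explicitly treat the jump corrections that arise in the jump-diffusion and L\'evy examples of Section~3; your observation that each jump term $G(s,V(X(s)))-G(s,V(X(s-)))-\partial_u G(s,V(X(s-)))\Delta V(X(s))$ is nonpositive by concavity fills that gap.
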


Assume Lemma~\ref{lemma:main} is already proved. Let us show that $\tau < \infty$ a.s. Since $G$ is nondecreasing with respect to $u$, we get: $G(t, u) \geqslant G(t, 1) = \Psi(t)$. Letting $n = 1, 2, \ldots$ we get: 
\begin{equation}
\label{eq:G-comp}
G(0, V(X(0))) = \mathbb E K(0) \geqslant \mathbb E K(n) \geqslant \mathbb E\left[G(n\wedge\tau, V(X(n\wedge\tau)))\right] \geqslant \mathbb E\Psi(n\wedge\tau).
\end{equation}
Assume the converse: $\tau = \infty$ with positive probability. Then 
\begin{equation}
\label{eq:Psi-comp}
\mathbb E\Psi(n\wedge\tau) \geqslant \Psi(n)\mathbb P(\tau = \infty).
\end{equation}
But $\Psi(\infty) = \infty$, as stated in subsection 2.1. Letting $n \to \infty$, we compare~\eqref{eq:G-comp} and~\eqref{eq:Psi-comp} and arrive at a contradiction. This proves that $\tau < \infty$ a.s. Since $K(t) \geqslant 0$ for all $t \geqslant 0$, then by Fatou's lemma we can remove the word ``local'' from Lemma~\ref{lemma:main}. From~\eqref{eq:product} we get:
\begin{align}
\label{eq:final-est}
\begin{split}
h(t)&\mathbb E\left[U(X_2(t))1_{\{\tau > t\}}\right] = \mathbb E\left[h(t\wedge\tau)U(X_2(t\wedge\tau))1_{\{\tau > t\}}\right]\\ & \leqslant \mathbb E\left[G(t\wedge\tau, V(X_2(t\wedge\tau)))1_{\{\tau > t\}}\right] \leqslant \mathbb E\left[G(t\wedge\tau, V(X_2(t\wedge\tau)))\right] \\ & = \mathbb E K(t) \leqslant \mathbb E K(0) = G(0, V(x_2)) = V(x_2).
\end{split}
\end{align}
Combining~\eqref{eq:first-est} with~\eqref{eq:final-est}, we get: 
$$
h(t)\left|\mathbb E[g(X_1(t))] - \mathbb E[g(X_2(t))]\right| \leqslant 2V(x_2).
$$
Dividing by $h(t)$ and taking the supremum over all $g : \mathbb R_+ \to \mathbb R$ with $|g| \leqslant U$, we complete the proof of (a). The proof of (b) is done similarly, and (c) follows from (b) and the observation that $\pi P^t = \pi$ for a stationary distribution $\pi$ and any $t \geqslant 0$. 

\subsection{Proof of Lemma~\ref{lemma:G}}  The boundary conditions are easy to show: $G(0, u) = \Psi(\Phi(u)) = u$ for $u \geqslant 1$, and $G(t, 1) = \Psi(\Phi(1) + t) = \Psi(t)$ for $t \geqslant 0$. Next, the function $\varphi : [1, \infty) \to \mathbb R_+$ is increasing. Thus the function $\Phi : [1, \infty) \to \mathbb R_+$ satisfies $\Phi(1) = 0$, increasing, and concave. Next, $\Psi : \mathbb R_+ \to [1, \infty)$ satisfies $\Psi(0) = 1$, increasing, and convex.  By the chain rule, $\Psi'(v) = \left[\Phi'(\Psi(v))\right]^{-1} = \varphi(\Psi(v))$. Let us do computation of derivatives of $G$:
\begin{align}
\label{eq:O}
\begin{split}
\frac{\partial G}{\partial t}(t, u) &= \Psi'(\Phi(u) + t) = \varphi(\Psi(\Phi(u) + t)) = \varphi(G(t, u));\\
\frac{\partial G}{\partial u}(t, u) &= \Psi'(\Phi(u)+t)\cdot\Phi'(u) = \frac{\varphi(\Psi(\Phi(u)+t))}{\varphi(u)} = \frac{\varphi(G(t, u))}{\varphi(u)} \geqslant 0.
\end{split}
\end{align}
These two equations from~\eqref{eq:O} prove the equality and the first inequality in~\eqref{eq:results}. Finally, we need to prove the second inequality in~\eqref{eq:results}, that is, concavity of $G$ with respect to $u$:
\begin{equation}
\label{eq:A}
\frac{\partial^2 G}{\partial u^2}(t, u) = \frac{\partial}{\partial u}\left( \frac{\varphi(G(t, u))}{\varphi(u)}\right) = 
\frac1{\varphi^2(u)}\left[\varphi'(G(t, u))\frac{\partial G}{\partial u}(t, u)\varphi(u) - \varphi(G(t, u))\varphi'(u)\right].
\end{equation}
Next, using the second equation in~\eqref{eq:O}, we get:
\begin{equation}
\label{eq:B}
\varphi'(G(t, u))\frac{\partial G}{\partial u}(t, u)\varphi(u) = \varphi'(G(t, u))\varphi(G(t, u)).
\end{equation}
Finally, $\varphi'$ is nonincreasing. Since $\Psi$ is nondecreasing, $G(t, u) = \Psi(\Phi(u)+t) \geqslant \Psi(\Phi(u)) = u$ for all $u \geqslant 1$ and $t \geqslant 0$. Thus $\varphi'(G(t, u)) \leqslant \varphi'(u)$. Multiplying this inequality by $\varphi(G(t, u)) \geqslant 0$ and using~\eqref{eq:A} and~\eqref{eq:B}, we complete the proof.

\subsection{Proof of Lemma~\ref{lemma:main}} We know that the following process is an $(\mathcal F_t)_{t \geqslant 0}$-supermartingale:
\begin{equation}
\label{eq:N}
N(t) = V(X(t\wedge\tau)) + \int_0^{t\wedge\tau}\varphi(V(X(s)))\,\mathrm{d}s,\, t \geqslant 0.
\end{equation}
When we write in differential notation, we let $df(t) \leqslant 0$ if $f$ is a nonincreasing function, and $\mathrm{d}f(t) \leqslant \mathrm{d}g(t)$ if $g - f$ is a nondecreasing function. For $t < \tau$, since $V$ is smooth and $X$ is a semimartingale, then $V(X(\cdot))$ is also a semimartingale. Thus we can apply It\^o's formula:
\begin{align}
\label{eq:K0}
\begin{split}
&\mathrm{d}K(t)  = \mathrm{d}G(t, V(X(t))) \\ & = \frac{\partial G}{\partial t}(t, V(X(t)))\,\mathrm{d}t + \frac{\partial G}{\partial u}(t, V(X(t)))\,\mathrm{d}V(X(t)) + \frac12\frac{\partial^2G}{\partial u^2}(t, V(X(t)))\,\mathrm{d}\langle V(X)\rangle_t.
\end{split}
\end{align}
From concavity of $G$ with respect to $u$ found in Lemma~\ref{lemma:G}, and from~\eqref{eq:K0}, we get:
\begin{equation}
\label{eq:K}
\mathrm{d}K(t) \leqslant \frac{\partial G}{\partial t}(t, V(X(t)))\,\mathrm{d}t + \frac{\partial G}{\partial u}(t, V(X(t)))\,\mathrm{d}V(X(t)).
\end{equation}
On the other hand, from~\eqref{eq:N} we get: $\mathrm{d}N(t) \leqslant \mathrm{d}V(X(t)) + \varphi(V(X(t)))\,\mathrm{d}t + \mathrm{d}M(t)$ for some local $(\mathcal F_t)_{t \geqslant 0}$-martingale $M = (M(t),\, t \geqslant 0)$. Thus for $t < \tau$ we have:
$\mathrm{d}V(X(t)) = \mathrm{d}N(t) - \mathrm{d}M(t) - \varphi(V(X(t)))\,\mathrm{d}t$. Next, from above calculations in~\eqref{eq:K} and~\eqref{eq:N}, we get: 
\begin{align}
\label{eq:K-N}
\begin{split}
\mathrm{d}K(t) & \leqslant \frac{\partial G}{\partial t}(t, V(X(t)))\,\mathrm{d}t 
 + \frac{\partial G}{\partial u}(t, V(X(t)))\,\mathrm{d}N(t) \\ & -  \frac{\partial G}{\partial u}(t, V(X(t)))\,\mathrm{d}M(t) -  \frac{\partial G}{\partial u}(t, V(X(t)))\,\varphi(V(X(t)))\,\mathrm{d}t.
\end{split}
\end{align}
From computations of derivatives of $G$ we get:
\begin{equation}
\label{eq:derivatives}
\frac{\partial G}{\partial t}(t, u) = \varphi(u)\frac{\partial G}{\partial u}(t, u).
\end{equation}
Plugging~\eqref{eq:derivatives} into~\eqref{eq:K-N}, we get:
$\mathrm{d}K(t) \leqslant \frac{\partial G}{\partial u}(t, V(X(t)))\,(\mathrm{d}N(t) - \mathrm{d}M(t))$. But $N$ is an $(\mathcal F_t)_{t \geqslant 0}$-supermartingale, and $M$ is a local $(\mathcal F_t)_{t \geqslant 0}$-martingale. Thus $N - M$ is also an $(\mathcal F_t)_{t \geqslant 0}$-supermartingale, and the same can be said about $K$, since $\frac{\partial G}{\partial u} \geqslant 0$. This completes the proof. 

\section{Appendix} 

Below is a method  to find $h$ and $U$ such that~\eqref{eq:product} holds. Take any {\it Young pair} $(H, K)$ of strictly increasing functions $\mathbb R_+ \to \mathbb R_+$ such that 
\begin{equation}
\label{eq:Young}
H(0) = K(0) = 0,\quad H(\infty) = K(\infty) = \infty,\quad xy \leqslant H(x) + K(y),\quad x, y \geqslant 0.
\end{equation}
The following example follows from Young's inequality:
\begin{equation}
\label{eq:p-q}
H(x) = \frac{x^{p}}p,\quad K(y) := \frac{y^q}{q},\quad p,\, q > 0,\quad \frac1p + \frac1q = 1.
\end{equation}
Assume~\eqref{eq:Young} holds. Since $H$ and $K$ are strictiy increasing, define their inverses $H^{-1}, K^{-1} : \mathbb R_+ \to \mathbb R_+$ such that $H(H^{-1}(x)) = x$ and $K(K^{-1}(y)) = y$ for all $x, y \geqslant 0$. Then
\begin{equation}
\label{eq:inv-Y}
H^{-1}(x)K^{-1}(y) \leqslant x + y,\, x, y \geqslant 0.
\end{equation}
For example, inverses of~\eqref{eq:p-q} are given by $H^{-1}(x) := (px)^{1/p}$, $K^{-1}(y) := (qy)^{1/q}$.


\begin{thebibliography}{99}

\bibitem{Asmussen} \textsc{Soren Asmussen} (1998). Subexponential Asymptotics for Stochastic Processes: Extremal Behavior, Stationary Distributions and First Passage Probabilities. \textit{Annals of Applied Probability} \textbf{8}, 354--374.

\bibitem{AB} \textsc{Rami Atar, Amarjit Budhiraja} (2002). Stability Properties of Constrained Jump-Diffusion Processes. \textit{Electronic Journal of Probability} \textbf{7}, no. 22.

\bibitem{M1} \textsc{Boris Baeumer, Mih\'aly Kov\'acs, Mark M. Meerschaert, Ren\'e L. Schilling, Peter Straka} (2016). Reflected Spectrally Negative Stable Processes and Their Governing Equations. \textit{Transactions of the American Mathematical Society} \textbf{368}, 227--248.

\bibitem{M2} \textsc{Boris Baeumer, Mih\'aly Kov\'acs, Mark M. Meerschaert, Harish Sankaranarayanan} Boundary Conditions for Fractional Diffusion, \textit{Journal of Computational and Applied Mathematics}, \textbf{336}, 408--424. 

\bibitem{BCG08} \textsc{Dominique Bakry, Patrick Cattiaux, Arnaud Guillin} (2008). Rate of Convergence of Ergodic Continuous Markov Chains: Lyapunov vs Poincar\'e. \textit{Journal of Functional Analysis} \textbf{254}, 727--754. 

\bibitem{Ben} \textsc{Jonathan Ben-Artzi, Amit Einav} (2020). Weak Poincar\'e Inequalities in the Absence of Spectral Gaps. \textit{Annales Henri Poincar\'e} \textbf{21}, 359--375.

\bibitem{Qnew} \textsc{Anton Braverman, Jim G. Dai, Masakiyo Miyazawa} (2017). Heavy Traffic Approximation for the Stationary Distribution of a Generalized Jackson Network: The BAR Approach. \textit{Stochastic Systems} \textbf{7}, 143--196.

\bibitem{Butkovsky} \textsc{Oleg Butkovsky} (2014). Subgeometic Rate of Convergence of Markov Processes in the Wasserstein Metric. \textit{Annals of Applied Probability} \textbf{24} (2), 526--552. 

\bibitem{Davies} \textsc{P. Laurie Davies} (1986). Rates of Convergence to the Stationary Distribution for $k$-Dimensional Diffusion Processes. \textit{Journal of Applied Probability} \textbf{23}, 370--384. 

\bibitem{Levy} \textsc{Krzysztof Debicki, Kamil Marcin Kosi\'nski, Michel Mandjes} (2012). On the Infimum Attained by a Reflected L\'evy Process. \textit{Queueing Systems} \textbf{70}, 23--35. 

\bibitem{DFG09} \textsc{Randal Douc, Gersende Fort, Arnaud Guillin} (2009). Subgeometric Rates of Convergence of $f$-Ergodic Strong Markov Processes. \textit{Stochastic Processes and Their Applications} \textbf{119}, 897--923.

\bibitem{DFMS04} \textsc{Randal Douc, Gersende Fort, Eric Moulines, Philippe Soulier} (2004). Practical Drift Conditions for Subgeometric Rates of Convergence. \textit{Annals of Applied Probability} \textbf{14}, 1353--1377.

\bibitem{DMT95} \textsc{Douglas Down, Sean P. Meyn, Richard L. Tweedie} (1995). Exponential and Uniform Ergodicity of Markov Processes. \textit{Annals of Probability} \textbf{23}, 1671--1691. 

\bibitem{Survey} \textsc{E. Robert Fernholz, Ioannis Karatzas} (2009). Stochastic Portfolio Theory: an Overview. \textit{Handbook of Numerical Analysis} \textbf{15}, 89--167.

\bibitem{FM} \textsc{Gersende Fort, Eric Moulines} $V$-Subgeometric Ergodicity for a Hastings-Metropolis Algorithm. \textit{Statistics and Probability Letters} \textbf{49}, 401--410.

\bibitem{FR05} \textsc{Gersende Fort, Gareth O. Roberts} (2005). Subgeometric Ergodicity of Strong Markov Processes. \textit{Annals of Applied Probability} \textbf{15}, 1565--1589.

\bibitem{Risk} \textsc{Pierre-Olivier Goffard, Andrey Sarantsev} (2019). Exponential Convergence Rate of Ruin Probabilities for Level-Dependent Levy-Driven Risk Processes. \textit{Journal of Applied Probability} \textbf{56}, 1244--1268.

\bibitem{Degenerate} \textsc{Martin Grothaus, Feng-Yu Wang} (2019). Weak Poincar\'e Inequalities for Convergence Rate of Degenerate Diffusion Processes. \textit{Annals of Probability} \textbf{47} (5), 2930--2952.

\bibitem{SPT} \textsc{Tomoyuki Ichiba, Soumik Pal, Mykhaylo Shkolnikov} (2013). Convergence Rates for Rank-Based Models with Applications to Portfolio Theory. \textit{Probability Theory \& Related Fields} \textbf{156}, 415--448. 

\bibitem{Walsh} \textsc{Tomoyuki Ichiba, Andrey Sarantsev} (2019). Convergence and Stationary Distributions for Walsh Diffusions. \textit{Bernoulli} \textbf{25}, 2439--2478. 

\bibitem{Q} \textsc{Offer Kella, Ward Whitt} (1990). Diffusion Approximations for Queues with Server Vacations. \textit{Advances in Applied Probability} \textbf{22}, 706--729.

\bibitem{LedouxBook} \textsc{Michel Ledoux} (2005). \textit{The Concentration of Measure Phenomenon}. Mathematical Surveys \& Monographs \textbf{89}. American Mathematical Society.

\bibitem{LMT96} \textsc{Robert B. Lund, Sean P. Meyn, Richard L. Tweedie} (1996). Computable Exponential Convergence Rates for Stochastically Ordered Markov Processes. \textit{Annals of Applied Probability} \textbf{6}, 218--237. 

\bibitem{LT96} \textsc{Robert B. Lund, Richard L. Tweedie} (1996). Geometric Convergence Rates for Stochastically Ordered Markov Chains. \textit{Mathematics of Operations Research} \textbf{21}, 182--194.

\bibitem{Order} \textsc{Teturo Kamae, Ulrich Krengel, George L. O'Brien} 
(1977). Stochastic Inequalities on Partially Ordered Spaces. \textit{Annals of Probability} \textbf{5}, 899--912. 

\bibitem{M01} \textsc{M. N. Malyshkin} (2001). Subexponential Estimates of the Rate of Convergence to the Invariant Measure for Stochastic Differential Equations. \textit{Theory of Probability and its Applications} \textbf{45}, 466--479.

\bibitem{MT93a} \textsc{Sean P. Meyn, Richard L. Tweedie} (1993). Stability of Markovian Processes II. Continuous-Time Processes and Sampled Chains. \textit{Advances in Applied Probability} \textbf{25}, 487--517. 

\bibitem{MT93b} \textsc{Sean P. Meyn, Richard L. Tweedie} (1993). Stability of Markovian Processes III. Foster-Lyapunov Criteria for Continuous-Time Markov Processes. \textit{Advances in Applied Probability} \textbf{25}, 518--548. 

\bibitem{MT94} \textsc{Sean P. Meyn, Richard L. Tweedie} (1994). Computable Bounds for Geometric Convergence Rates of Markov Chains. \textit{Annals of Applied Probability} \textbf{4}, 981--1011. 

\bibitem{MTBook} \textsc{Sean P. Meyn, Richard L. Tweedie} (2009). \textit{Markov Chains and Stochastic Stability} Cambridge University Press, second edition. 

\bibitem{RT00} \textsc{Gareth O. Roberts, Richard L. Tweedie} (2000). Rates of Convergence of Stochastically Monotone and Continuous-Time Markov Chains. \textit{Journal of Applied Probability} \textbf{37}, 359--373. 

\bibitem{Rockner} \textsc{Michael R\"ockner, Feng-Yu Wang} (2001). Weak Poincare Inequalities and $L^2$ Convergence Rates of Markov Semigroups. \textit{Journal of Functional Analysis} \textbf{185} (2), 564--603.  

\bibitem{MCMC} \textsc{Jeffrey S. Rosenthal} (1995). Minorization Conditions and Convergence Rates for Markov Chain Monte Carlo. \textit{Journal of the American Statistical Association} \textbf{90}, 558--566. 

\bibitem{Sar16} \textsc{Andrey Sarantsev} (2016). Explicit Rates of Exponential Convergence for Reflected Jump-Diffusions on the Half-Line. \textit{Latin American Journal of Probability and Mathematical Statistics} \textbf{13}, 1069--1093. 

\bibitem{MyOwn10} \textsc{Andrey Sarantsev} (2017). Reflected Brownian Motion in a Convex Polyhedral Cone: Tail Estimates for the Stationary Distribution. \textit{Journal of Theoretical Probability} \textbf{30}, 1200--1223.

\bibitem{Sar} \textsc{Andrey Sarantsev} (2019). Convergence Rate to Equilibrium in Wasserstein Distance for Reflected Jump-Diffusions (2020). \textit{Statistics and Probability Letters} \textbf{165} 108860.

\bibitem{Skorohod} \textsc{Anatolii V. Skorohod} (1961). Stochastic Equations for Diffusion Processes in a Bounded Region. \textit{Theory of Probability and Its Applications} \textbf{6}, 264--274. 

\bibitem{TT94} \textsc{Pekka Tuominen, Richard L. Tweedie} (1994). Subgeometric Rates of Convergence of $f$-Ergodic Markov Chains. \textit{Advances in Applied Probability} \textbf{26}, 775--798.


\bibitem{WangBook} \textsc{Feng-Yu Wang} (2006). \textit{Functional Inequalities, Markov Semigroups and Spectral Theory}. Springer 

\bibitem{Zeif91} \textsc{Alexander I. Zeifman} (1991). Some Estimates of the Rate of Convergence for Birth and Death Processes. \textit{Journal of Applied Probability} \textbf{28}, 268--277. 

\end{thebibliography}
\end{document}